\newtheorem{lemma}{Lemma}
\newtheorem{theorem}[lemma]{Theorem}
\newtheorem{proposition}[lemma]{Proposition}
\newtheorem{remark}{Remark}
\newcommand*\xbar[1]{%
   \hbox{%
     \vbox{%
       \hrule height 0.5pt 
       \kern0.5ex
       \hbox{%
         \ensuremath{#1}%
       }%
     }%
   }%
}
\begin{document}
\title{The Undirected Optical Indices of Trees}

\author{Yuan-Hsun Lo}
\address{Department of Applied Mathematics, National Pingtung University, Pingtung 900, Taiwan, ROC}
\email[Y.-H.~Lo]{yhlo@mail.nptu.edu.tw}

\author{Hung-Lin Fu}
\address{Department of Applied Mathematics, National Yang Ming Chiao Tung University, Hsinchu 300, Taiwan, ROC}
\email[H.-L.~Fu]{hlfu@math.nctu.edu.tw}

\author{Yijin Zhang}
\address{School of Electronic and Optical Engineering, Nanjing University of Science and Technology, Nanjing, China} 
\email[Y.~Zhang]{yijin.zhang@gmail.com}

\author{Wing Shing Wong}
\address{Department of Information Engineering, the Chinese University of Hong Kong, Shatin, Hong Kong}
\email[W.~S.~Wong]{wswong@ie.cuhk.edu.hk}

\subjclass[2010]{05C05; 05C15; 05C90}

\keywords{optical index; forwarding index; path-coloring; all-to-all routing}



\maketitle

\begin{abstract}
For a connected graph $G$, an instance $I$ is a set of pairs of vertices and a corresponding routing $R$ is a set of paths specified for all vertex-pairs in $I$.
Let $\mathfrak{R}_I$ be the collection of all routings with respect to $I$.
The undirected optical index of $G$ with respect to $I$ refers to the minimum integer $k$ to guarantee the existence of a mapping $\phi:R\to\{1,2,\ldots,k\}$, such that $\phi(P)\neq\phi(P')$ if $P$ and $P'$ have common edge(s), over all routings $R\in\mathfrak{R}_I$.
A natural lower bound of the undirected optical index is the edge-forwarding index, which is defined to be the minimum of the maximum edge-load over all possible routings.
Let $w(G,I)$ and $\pi(G,I)$ denote the undirected optical index and edge-forwarding index with respect to $I$, respectively.
In this paper, we derive the inequality $w(T,I_A)<\frac{3}{2}\pi(T,I_A)$ for any tree $T$, where $I_A:=\{\{x,y\}:\,x,y\in V(T)\}$ is the all-to-all instance.
\end{abstract}

\section{Introduction} \label{sec:intro}


Let $G$ be a connected graph with vertex set $V(G)$ and edge set $E(G)$.
An \emph{instance} $I$ is a set (or, multiset) of vertex-pairs of $V(G)$.
A \emph{routing} $R$ in $G$ with respect to $I$ is a set of $|I|$ paths, one for each vertex-pair in $I$.
That is, $\{x,y\}\in I$ if and only if there is a path having $x$ and $y$ as its terminal vertices.
Such a path is denoted by $P_{x,y}$ or $P_{y,x}$.
A \emph{$k$-path-coloring} of $R$ is a mapping $\phi:R\to\{1,2,\ldots,k\}$, and is said to be \emph{proper} if $\phi(P)\neq\phi(P')$ whenever $P$ and $P'$ have common edge(s).
Let $w(G,I,R)$ be the minimum integer $k$ to guarantee the existence of a proper $k$-path-coloring of $R$.
Let $\mathfrak{R}_I$ denote the collection of all routings in $G$ with respect to $I$.
The \emph{undirected optical index} (or the \emph{path-chromatic number}) of $G$ with respect to $I$ is then defined to be 
\begin{align*}
w(G,I):=\min_{R\in\mathfrak{R}_I}w(G,I,R).
\end{align*}
Note that by constructing a graph $Q(R)$, say \emph{conflict graph}, on $R$ by paths $P$ and $P'$ being adjacent if and only if they have common edge(s), the value $w(G,I,R)$ turns out to be the chromatic number of $Q(R)$, i.e., $\chi(Q(R))$.

For a routing $R\in\mathfrak{R}_I$ and an edge $e\in E(G)$, the \emph{edge-load} of $e$, denoted by $\ell_{G,R}(e)$, is the number of paths in $R$ passing through $e$.
Let $\pi(G,I,R)$ denote the maximum value of $\ell_{G,R}(e)$ by going through all edges in $G$, i.e., $\pi(G,I,R)=\max_{e\in E(G)}\ell_{G,R}(e)$.
The \emph{edge-forwarding index} of $G$ with respect to $I$ is then defined by
\begin{align*}
\pi(G,I):=\min_{R\in\mathfrak{R}_I} \pi(G,I,R).
\end{align*}
By definition, the edge forwarding index provides a natural lower bound of the undirected optical index, namely, $\pi(G,I)\leq w(G,I)$ for any connected graph $G$ and instance $I$.

Analogous parameters can be introduced when considering a connected bidirected graph, which is a digraph obtained from a connected (undirected) graph by putting two opposite arcs on each edge.
In a bidirected graph $G$, an instance $I$ consists of ordered pairs of vertices and a corresponding routing $\vec{R}_I$ refers to a set of $|I|$ dipaths specified for all ordered pairs in $I$.
The \emph{optical index} and \emph{arc-forwarding index}, denoted by $\vec{w}(G,I)$ and $\vec{\pi}(G,I)$ respectively, are defined accordingly.
We use the right-arrow symbol to emphasize that the parameters are considered in a directed version.
It is worth noting that the evaluation of optical indices is known as the \emph{routing and wavelength assignment} (RWA) problem, which arises from the investigation of optimal wavelength allocation in an optical network that employs Wavelength Division Multiplexing (WDM)~\cite{Alexander93}.

For an arbitrary instance $I$, to evaluate the exact value of $w(G,I)$ has been shown to be NP-hard, even for trees~\cite{GJ85} and cycles~\cite{EJ01}.
Some approximation algorithms were proposed in~\cite{Tarjan85,RU94}.
The best known results are with approximation ratio $\frac{4}{3}$ for trees~\cite{EJ01} and approximation ratio $2-o(1)$ for cycles~\cite{Cheng04}.
When it comes to directed case, it is also NP-hard to determine $\vec{w}(G,I)$ for trees and cycles~\cite{EJ01}.
A $\frac{5}{3}$-approximation algorithm for trees was proposed in~\cite{EJKMP99} and a $2$-approximation algorithm for cycles was given in~\cite{Caragiannis09}.
As $\vec{\pi}(G,I)$ being a natural lower bound of $\vec{w}(G,I)$, Kaklamanis \emph{et al.}~\cite{KPEJ97} showed that $\frac{5}{3}\vec{\pi}(G,I)$ colors are enough when $G$ is a tree, and Tucker~\cite{Tucker75} showed that $2\vec{\pi}(G,I)-1$ colors are enough when $G$ is a cycle.
Interested readers are referred to \cite{BBGHPV97,GMZ15,HMS89,Lampis13,QZ04,Sole95} for more information.

Some literatures focused on the fundamental case when the instance consists of all vertex-pairs (or, ordered pairs of vertices for directed case), called \emph{all-to-all} instance and denoted by $I_A$.
That is, $|I_A|={|V(G)|\choose 2}$ for undirected case and $|I_A|=|V(G)|(|V(G)|-1)$ for directed case.
It has been proved that the equality $\vec{w}(G,I_A)=\vec{\pi}(G,I_A)$ holds for trees~\cite{GHP97}, cycles~\cite{Wilfong96}, trees of cycles~\cite{BPT99}, some Cartesian product of paths or cycles with equal lengths~\cite{Beauquier99,SSV97}, some certain compound graphs~\cite{ART01} and circulant graphs~\cite{ART01,GMZ15}.
Kosowski~\cite{Kosowski09} provided a family of graphs satisfying $\vec{w}(G,I_A)>\vec{\pi}(G,I_A)$.

The results for all-to-all instance on undirected case are relatively few.
The exact value of $w(G,I_A)$ and the gap between it to $\pi(G,I_A)$ are characterized for cycles~\cite{LZWF15} or complete $m$-ary trees~\cite{FLWZ17}.
Recently, it was conjectured in \cite{FLWZ17} that $w(G,I_A)$ is upper bounded by $\frac{3}{2}\pi(G,I_A)$ in the case when $G$ is a tree.
This paper is devoted to prove this conjecture.
It should be noted here that, both the $\frac{4}{3}$-approximation algorithm in \cite{EJ01} for undirected case and the method of the usage of $\frac{5}{3}\vec{\pi}(G,I)$ colors in \cite{KPEJ97} for directed case do not cover our result, because the former one does not take the gap between the undirected optical and edge-forwarding indices into consideration while the latter one focuses on directed trees and the ratio $\frac{5}{3}$ is larger than $\frac{3}{2}$, as claimed in this paper.

\section{Main Result}\label{sec:main}
Let $T$ be a tree.
There is a unique path to connect any pair of vertices in $T$, so $|\mathfrak{R}_I|=1$, for any instance $I$.
Hereafter we only consider the all-to-all instance and use $R$ to denote the unique all-to-all routing.
For convenience, $w(T,I_A)$, $\pi(T,I_A)$ and $\ell_{T,R}(e)$ are simply written as $w(T)$, $\pi(T)$ and $\ell_T(e)$, respectively.

Since each edge $e\in E(T)$ is a bridge, $\ell_T(e)$ is equal to the product of the numbers of vertices of the two components in $T-e$.
Therefore, a natural upper bound of $\pi(T)$ is obtained as follows.


\begin{proposition}\label{prop:cap-upper}
If $T$ is a tree of order $n$, then $$\pi(T) \leq \frac{n^2}{4}.$$ 
\end{proposition}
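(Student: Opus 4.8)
The plan is to reduce the statement to an elementary one-variable optimization, using the combinatorial description of the edge-loads already recorded just above the proposition. Since $T$ is a tree, the all-to-all routing $R$ is the unique element of $\mathfrak{R}_{I_A}$, so that $\pi(T)=\pi(T,I_A,R)=\max_{e\in E(T)}\ell_T(e)$; in particular there is no minimization over routings to worry about, and it suffices to bound $\ell_T(e)$ uniformly over all edges $e$.

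Next I would fix an edge $e\in E(T)$ and let $a_e$ and $n-a_e$ denote the orders of the two components of $T-e$, where $1\le a_e\le n-1$. By the remark preceding the proposition, a vertex-pair $\{x,y\}$ routes through $e$ precisely when $x$ and $y$ lie in opposite components of $T-e$, and every such pair does so along its unique connecting path; hence $\ell_T(e)=a_e(n-a_e)$.

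It then remains to show $a(n-a)\le \frac{n^2}{4}$ for every integer $a$ with $1\le a\le n-1$. I would obtain this by completing the square, writing $a(n-a)=\frac{n^2}{4}-\bigl(a-\tfrac{n}{2}\bigr)^2\le\frac{n^2}{4}$, which in fact holds for all real $a$; equivalently one may invoke the AM--GM inequality on the two nonnegative quantities $a$ and $n-a$. Taking the maximum over all edges $e$ then yields $\pi(T)\le \frac{n^2}{4}$.

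There is no genuine obstacle in this argument: the only point requiring care is the translation from the graph-theoretic quantity $\pi(T)$ into the arithmetic expression $a_e(n-a_e)$, which is exactly the content of the bridge observation stated before the proposition. I would also note that the bound is essentially tight, being attained when $n$ is even and some edge splits $T$ into two halves of order $n/2$ (for instance the central edge of a path on an even number of vertices), so the constant $\tfrac14$ cannot be improved.
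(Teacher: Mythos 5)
Your proof is correct and follows exactly the argument the paper intends: since the all-to-all routing in a tree is unique, $\ell_T(e)=a_e(n-a_e)$ by the bridge observation stated just before the proposition, and $a(n-a)\le\frac{n^2}{4}$ by completing the square. The paper leaves this proof implicit for the same reason, so your write-up matches its approach (and your tightness remark is a nice, harmless bonus).
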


Here is the main result in this paper.

\begin{theorem}\label{thm:main}
If $T$ is a tree of order $n$ ($n\geq 2$), then
\begin{equation}\label{eq:main}
w(T)< \frac{3}{2}\pi(T).
\end{equation}
\end{theorem}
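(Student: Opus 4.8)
The plan is to root $T$ at a centroid $c$ and to reduce the path-coloring to a family of edge-coloring problems on multigraphs, one per vertex, which are then solved with a single global palette of size below $\frac{3}{2}\pi(T)$ by means of Shannon's theorem on the chromatic index of a multigraph. First I would pin down the forwarding index exactly. Rooting at $c$, the lower component of $T-e$ has size $s\le n/2$ for every edge $e$ (centroid property), so $\ell_T(e)=s(n-s)$ with $s\mapsto s(n-s)$ increasing on $[0,n/2]$; hence $\pi(T)=n_1(n-n_1)$, where $n_1$ is the size of the largest branch at $c$. This locates the bottleneck edge and will be used to cap the maximum degree of every local multigraph by $\pi(T)$.

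Next, assign to each path $P_{x,y}$ its \emph{peak}, the vertex nearest the root (the lowest common ancestor of $x$ and $y$), and process the vertices top-down. At a vertex $v$ with child-edges $e_1,\dots,e_d$ and parent-edge $e_0$, partition $V(T)$ according to which edge at $v$ a vertex is reached through (with $v$ itself a singleton part); the paths meeting $v$ then form a multigraph $M_v$ on the ports $e_0,e_1,\dots,e_d$ with exactly $n_in_j$ parallel edges between the ports of two parts of sizes $n_i,n_j$. The degree of port $e_i$ in $M_v$ is exactly $\ell_T(e_i)\le\pi(T)$, and a proper edge-coloring of $M_v$ is precisely a wavelength assignment resolving all conflicts ``witnessed'' at $v$. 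The key structural observation is that two conflicting paths always share a port at the deeper of their two peaks: if both traverse an edge $(u,w)$, they both follow the vertical segment from $w$ up to the shallower of the two peaks, hence use a common child-port there. Consequently, if at each $v$ we color the child-port edges of $M_v$ so as to avoid the colors already fixed on the edges at $e_0$ (which belong to paths with shallower peaks, colored earlier), the union over all $v$ is a globally proper coloring. This turns the theorem into a sequence of edge-precoloring-extension problems on the multigraphs $M_v$, glued through the parent-ports.

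The main obstacle is to carry out every extension within one shared palette of size strictly below $\frac{3}{2}\pi(T)$. At the centroid there is no parent-port, so $M_c$ is merely edge-colored, and Shannon gives $\chi'(M_c)\le\lfloor\frac{3}{2}\Delta(M_c)\rfloor=\lfloor\frac{3}{2}\pi(T)\rfloor$. At a deeper $v$ the edges at $e_0$ are already colored, and I would record these constraints as a set $F_i$ of forbidden colors at each child-port $e_i$. Here the structure is favorable: the paths producing $F_i$ all also traverse $e_0$, so they pairwise conflict; thus each $F_i$ is rainbow, the $F_i$ are pairwise disjoint, and $|F_i|+\deg_{M_v-e_0}(e_i)=\ell_T(e_i)\le\pi(T)$, so every color is blocked at most at one port. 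The technical heart is therefore a Shannon-type lemma: a multigraph of the above multipartite form, with one precolored port imposing disjoint-rainbow forbidden lists, can be edge-colored within $\lfloor\frac{3}{2}\pi(T)\rfloor$ colors, the disjointness of the $F_i$ creating exactly the slack needed to absorb the precoloring without exceeding the palette used at $c$.

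Finally I would upgrade $w(T)\le\lfloor\frac{3}{2}\pi(T)\rfloor$ to the strict inequality \eqref{eq:main}. When $\pi(T)$ is odd this is automatic, since $\lfloor\frac{3}{2}\pi(T)\rfloor<\frac{3}{2}\pi(T)$. The genuinely delicate case is $\pi(T)$ even, where one must save a single color: here I would invoke the characterization of equality in Shannon's theorem—the bound $\frac{3}{2}\Delta$ is attained essentially only by triangle-like configurations carrying many parallel edges—and argue that the balanced multigraphs $M_v$ coming from a centroid-rooted tree (all parts of size $\le n/2$) are never of this extremal type, so their chromatic index falls below $\frac{3}{2}\Delta$. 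I expect this strictness step, together with making the precoloring-extension lemma hold uniformly at every vertex inside the single palette, to be the most demanding part of the argument.
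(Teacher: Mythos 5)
Your skeleton is sound and genuinely different from the paper's argument (the paper does an elementary induction on $n$, splitting at a maximum-load edge $\hat e$ and running a case analysis on the degree $d$ of its endpoint $r$, using only counting plus edge- and total-colorings of complete graphs). Your structural observations all check out: rooting at a centroid gives $\pi(T)=n_1(n-n_1)$; two edge-intersecting paths do share a child-port at the deeper of their two peaks, so top-down processing localizes every conflict; and the lists $F_i$ are indeed pairwise disjoint and rainbow with $|F_i|+\deg_{M_v-e_0}(e_i)=\ell_T(e_i)\le\pi(T)$. The genuine gap is exactly where you flag it: the Shannon-type precoloring-extension lemma is asserted, not proved, and it is not an off-the-shelf statement. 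Worse, the hypotheses you isolate do not suffice by themselves: deep in the tree the residual degrees can be far smaller than $\pi(T)$, so $|F_i|$ and $|F_j|$ can each approach $\pi(T)$, and naive counting leaves an uncolored edge between ports $i$ and $j$ with only about $\frac{3}{2}\pi-|F_i|-|F_j|$ available colors, which can be negative for a generic multigraph satisfying your conditions. What rescues the plan is tree-specific and absent from your write-up: if $m_0$ counts the vertices above $v$ and $\sigma$ those below, the parent edge of $v$ has load $m_0(1+\sigma)\le\pi(T)$, whence $|F_i|+|F_j|\le m_0\sigma\le\pi(T)-m_0$, so every uncolored edge retains at least $\lfloor\pi/2\rfloor+m_0$ colors; combined with the centroid inequality $m_0\ge n/2\ge 1+\sigma$ this exceeds $\frac{3}{2}\Delta$ of the residual multigraph, at which point the \emph{list} version of Shannon's theorem (Borodin--Kostochka--Woodall: every multigraph is $\lfloor 3\Delta/2\rfloor$-edge-choosable) would complete the extension. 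Without this computation or an equivalent, the ``technical heart'' of your proof is simply missing.

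The strictness step has a second gap. Your appeal to ``the characterization of equality in Shannon's theorem'' is problematic on three counts: the characterization as you state it is false for $\Delta=2$, where odd cycles attain $\lfloor 3\Delta/2\rfloor=3$ with no fat triangle; for $\Delta\ge 4$ the clean derivation runs through the Goldberg--Seymour theorem, an enormously heavier tool than anything this statement warrants; and, most importantly, it addresses only the uncolored root multigraph $M_c$ --- at every other vertex you would need a \emph{strict} version of the (unproven) extension lemma, valid inside a palette of $\frac{3}{2}\pi(T)-1$ colors, and no extremal characterization for edge-precoloring extension exists to quote. So as written this is a credible program rather than a proof: the two lemmas you defer are precisely where all of the difficulty of the theorem lives, and the paper's case analysis, though less elegant, actually discharges that difficulty.
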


\begin{proof}
We proceed by induction on $n$.
It is obvious that \eqref{eq:main} holds when $n\leq 3$.
In what follows we consider $n\geq 4$, and assume \eqref{eq:main} holds for any tree of order less than $n$.

Let $\hat{e}$ be the edge maximizing the value $\ell_T(e)$ among all edges, that is, $\ell_T(\hat{e})=\pi(T)$.
Note that $\hat{e}$ may not be unique.
Let $A$ and $B$ be the two connected components of $T-\hat{e}$ with $a\geq b$, where $a:=|V(A)|$ and $b:=|V(B)|$.

We first consider the case when $b\geq\frac{3}{4}a$.
The paths in $R$ are partitioned into three classes.
\begin{itemize}
\item $\mathcal{P}_1:=\{P_{x,y}:\, x,y\in V(A)\}$.
\item $\mathcal{P}_2:=\{P_{x,y}:\, x,y\in V(B)\}$.
\item $\mathcal{P}_3:=\{P_{x,y}:\, x\in V(A) \text{ and } y\in V(B)\}$.
\end{itemize}
It follows from previous observation that $\chi(Q(\mathcal{P}_1))=w(A)$, $\chi(Q(\mathcal{P}_2))=w(B)$ and $\chi(Q(\mathcal{P}_3))=|\mathcal{P}_3|=a\cdot b$.
Observe that any two paths, one in $\mathcal{P}_1$ and another in $\mathcal{P}_2$, can receive the same color.
By the induction hypothesis and Proposition~\ref{prop:cap-upper}, it follows that
\begin{align*}
w(T) &\leq \max\left\{w(A),w(B)\right\} + |\mathcal{P}_3| \\ 
&< \max\big\{\frac{3}{2}\pi(A),\frac{3}{2}\pi(B)\big\} + |\mathcal{P}_3| \leq \max\big\{\frac{3}{8}a^2, \frac{3}{8}b^2 \big\} + ab \\
&= a\big(\frac{3}{8}a + b\big) \leq  a\big(\frac{1}{2}b + b\big) = \frac{3}{2}ab \\
&= \frac{3}{2}\pi(T),
\end{align*}
as desired.

In what follows, consider $b<\frac{3}{4}a$.
Denote by $r$ the endpoint of $\hat{e}$ with $r\in V(A)$.
Let $B_1(=B),B_2,\ldots,B_d$ be the connected components of $T-r$, where $d=\deg_T(r)$. 
Note that $d\geq 2$ since we assume $n\geq 4$.
For $i=1,2,\ldots,d$ denote by $s_i$ the neighbor of $r$ with $s_i\in V(B_i)$, and let $b_i=|V(B_i)|$.
The assumption $\ell_T(\hat{e})=\pi(T)$ implies that $b_1\geq b_i$ for $i\geq 2$.
Without loss of generality, we assume $b_1\geq b_2\geq \cdots \geq b_d$.
See Fig.~\ref{fig:tree_structure} for the illustration of the structure of $T$.
Notice that $b_1=b\leq a=1+b_2+\cdots+b_d$ and $\pi(T)=ab=b_1(1+b_2+\cdots+b_d)$.

\begin{figure}[h]
\centering
\includegraphics[width=4in]{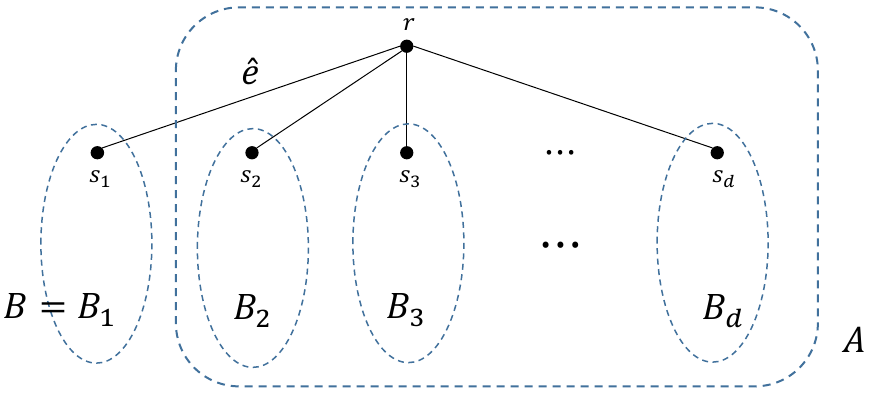}
\caption{The structure of $T$.} \label{fig:tree_structure}
\end{figure}

To render the paper more readable, the rest of the proof is moved to the next two sections.
The framework of the whole proof is illustrated as follows.
\begin{align*}
\begin{cases}
b \geq \frac{3}{4}a & (\text{Section~\ref{sec:main}}) \vspace*{1cm} \\ 
b < \frac{3}{4}a & 
	\begin{cases}
	d = 2 & (\text{Prologue of Section~\ref{sec:d=234}}) \\
	d = 3 & (\text{Section~\ref{sec:d=234-3}}) \\
	d = 4 & (\text{Section~\ref{sec:d=234-4}}) \vspace*{0.5cm} \\
	d \geq 5 & 
		\begin{cases}
		b_1 \geq b_{d-1}+b_d & (\text{Prologue of Section~\ref{sec:d>=5}}) \vspace*{0.3cm} \\
		b_1 < b_{d-1}+b_d & 
			\begin{cases}
			b_1=b_2=\cdots=b_d \ \ (\text{Section~\ref{sec:d>=5-2}})  \\
			b_1=\cdots=b_k > b_{k+1} \geq \cdots \geq b_d \ \ (\text{Section~\ref{sec:d>=5-3}}) 
			\end{cases}
		\end{cases}
	\end{cases}
\end{cases}
\end{align*}
\end{proof}


\begin{remark}\rm
We follow the notation in Fig.~\ref{fig:tree_structure} in this remark.
It is worth noting that the collection of paths crossing the vertex $r$ can be colored properly by $\frac{3}{2}\pi(T)$ colors by simply applying the following result, which was proposed by Shannon~\cite{Shannon49}. 
This result can also be found in \cite[Theorem 7.1.13]{West01}. 
\begin{adjustwidth}{0.8cm}{}\it
If $G$ is a (multi)graph, then the edge-chromatic number of $G$ is upper bounded by $\frac{3}{2}$ times the maximum degree of $G$, i.e., $\chi'(G)\leq\frac{3}{2}\Delta(G)$.
\end{adjustwidth}\rm 
We associate the paths crossing $r$ with a multigraph $\mathcal{H}$ with vertex set $\{v_1,v_2,\ldots,v_d\}$ in such a way that each path whose terminal vertices are in $B_i$ and $B_j$ is represented by an edge connecting $v_i$ and $v_j$.
Notice that the maximum degree of $\mathcal{H}$ is $b(a-1)$, which is less than $\pi(T)$.
By above mentioned result, $\chi'(\mathcal{H})\leq\frac{3}{2}\Delta(\mathcal{H})<\frac{3}{2}\pi(T)$, which leads to a proper path-coloring $\phi$ using at most $\frac{3}{2}\pi(T)$ colors.
Meanwhile, let $T_i$ be the subtree of $T$ induced by $V(B_i)\cup\{r\}$, for $i=1,2,\ldots,d$.
It is obvious that $w(T_i)<\frac{3}{2}\pi(T)$ for each $i$ by induction hypothesis.
As such, the proof of Theorem~\ref{thm:main} can be done if the colors of the paths in each $T_i$ can be appropriately permuted to match the path coloring $\phi$.
However, it is not an easy task to get this well done if there is no further information on the structure of each $T_i$.
This is the reason why our proof is divided into several cases according to the size of each $B_i$. 
\end{remark}

\section{Proof of Theorem~\ref{thm:main} for $b<\frac{3}{4}a$ and $2\leq d\leq 4$} \label{sec:d=234}


We first consider the case when $d=2$.
Let $e$ be the edge connecting $r$ and $s_2$.
If $a>b+1$, one has 
\begin{align*}
\ell(e)=(b+1)(a-1)>ab=\pi(T),
\end{align*}
which contradicts the definition of $\pi(T)$.
Since $b<\frac{3}{4}a$, the inequality $a>b+1$ holds whenever $b\geq 3$.
So, there are only two exceptional cases: $b=1,2$.
With the condition $\frac{4}{3}b<a\leq b+1$, the two exceptional cases are $b=1,a=2$ and $b=2, a=3$, leading to the two paths $P_3$ and $P_5$, respectively.
Note that $P_n$ refers to the path of $n$ vertices.
It is easy to verify that $w(P_3)=3=\pi(P_3)$ and $w(P_5)=6=\pi(P_5)$, so the inequality in \eqref{eq:main} still holds.

The following subsections are devoted to the two cases: $d=3$ and $d=4$.

\subsection{$\mathbf{\textit{\textbf{d}}=3}$} \label{sec:d=234-3}

In this case, $a=1+b_2+b_3$ and $\pi(T)=b_1(1+b_2+b_3)$.
The paths in $R$ are classified into the following 7 classes.
\begin{itemize}
\item $\mathcal{P}_1:=\{P_{r,y}:\,y\in V(T)\setminus\{r\}\}$. 
\item $\mathcal{P}_2:=\{P_{x,y}:\,x\in V(B_1), y\in V(B_2)\}$.
\item $\mathcal{P}_3:=\{P_{x,y}:\,x\in V(B_1), y\in V(B_3)\}$.
\item $\mathcal{P}_4:=\{P_{x,y}:\,x\in V(B_2), y\in V(B_3)\}$. 
\item $\mathcal{P}_5:=\{P_{x,y}:\,x,y\in V(B_1)\}$.
\item $\mathcal{P}_6:=\{P_{x,y}:\,x,y\in V(B_2)\}$.
\item $\mathcal{P}_7:=\{P_{x,y}:\,x,y\in V(B_3)\}$.
\end{itemize}

Firstly, two paths in $\mathcal{P}_1$ can receive the same color if their terminal vertices (except for the one $r$) are not in the same set $V(B_i)$, $i=1,2$ or $3$.
Since all paths in $\{P_{r,y}:\,y\in V(B_i)\}$ share the edge $\{r,s_i\}$ for $i=1,2,3$, one has 
\begin{align*}
\chi(Q(\mathcal{P}_1))\leq\max\{b_1,b_2,b_3\}=b_1.
\end{align*}
In other words, by letting $\mathcal{P}_1^{(i)}=\{P_{r,y}:\,y\in V(B_i)\}$ for $i=1,2,3$, we use the same set of $b_1$ colors to color each $\mathcal{P}_1^{(i)}$, $1\leq i\leq 3$, in such a way that all paths in the same set $\mathcal{P}_1^{(i)}$ receive distinct colors.
Secondly, since any two paths, one in $\mathcal{P}_2$ and the other in $\mathcal{P}_7$, have no edges in common, one has $\chi(Q(\mathcal{P}_2\cup\mathcal{P}_7))\leq\max\{\chi(Q(\mathcal{P}_2))+\chi(Q(\mathcal{P}_7))\}$.
By the fact $\chi(Q(\mathcal{P}_2))=|\mathcal{P}_2|=b_1b_2$, the induction hypothesis that $\chi(Q(\mathcal{P}_7))<\frac{3}{2}\pi(B_3)$ and $\pi(B_3)\leq\frac{1}{4}b_3^2$ from Proposition~\ref{prop:cap-upper}, we have 
\begin{align*}
\chi(Q(\mathcal{P}_2\cup\mathcal{P}_7)) \leq \max\big\{b_1b_2,\frac{3}{8}b_3^2\big\} = b_1b_2.
\end{align*}
Similarly, $\chi(Q(\mathcal{P}_3\cup\mathcal{P}_6)) \leq \max\{b_1b_3,\frac{3}{8}b_2^2\}$ and $\chi(Q(\mathcal{P}_4\cup\mathcal{P}_5)) \leq \max\{b_2b_3,\frac{3}{8}b_1^2\}$.
It follows that
\begin{align}
w(T) &\leq \chi(Q(\mathcal{P}_1)) + \chi(Q(\mathcal{P}_2\cup\mathcal{P}_7)) + \chi(Q(\mathcal{P}_3\cup\mathcal{P}_6)) + \chi(Q(\mathcal{P}_4\cup\mathcal{P}_5)) \label{eq:2-upper-def} \\
&\leq b_1 + b_1b_2 + \max\big\{b_1b_3,\frac{3}{8}b_2^2\big\} + \max\big\{b_2b_3,\frac{3}{8}b_1^2\big\}. \label{eq:2-upper}
\end{align}

Consider the case when $b_2b_3\geq\frac{3}{8}b_1^2$.
Since $b_1b_3\geq\frac{3}{8}b_2^2$ due to $b_1\geq b_2\geq b_3$, it follows from \eqref{eq:2-upper} that
\begin{align*}
w(T) &\leq b_1 + b_1b_2 + b_1b_3 + b_2b_3 = b_1(1+b_2+b_3) + \frac{1}{2}b_2b_3 + \frac{1}{2}b_2b_3 \\
&\leq b_1(1+b_2+b_3) + \frac{1}{2}b_1b_2 + \frac{1}{2}b_1b_3 <  b_1(1+b_2+b_3) + \frac{1}{2}b_1(1+b_2+b_3) \\
&= \frac{3}{2}\pi(T).
\end{align*}

Next, consider the case when $b_2b_3<\frac{3}{8}b_1^2$.
If $b_1b_3\geq\frac{3}{8}b_2^2$, by \eqref{eq:2-upper} and the assumption that $b<\frac{3}{4}a$ we have
\begin{align*}
w(T) &\leq b_1 + b_1b_2 + b_1b_3 + \frac{3}{8}b_1^2 < b_1(1+b_2+b_3) + \frac{9}{32}b_1(1+b_2+b_3) \\
&< \frac{3}{2}b_1(1+b_2+b_3) = \frac{3}{2}\pi(T).
\end{align*}
Otherwise, by \eqref{eq:2-upper} and $b<\frac{3}{4}a$ again we have
\begin{align}
w(T) &\leq b_1 + b_1b_2 + \frac{3}{8}b_2^2 + \frac{3}{8}b_1^2 \leq b_1 + b_1b_2 + \frac{3}{4}b_1^2 \notag \\ 
& < b_1 + b_1b_2 + \frac{9}{16}b_1(1+b_2+b_3) = \frac{3}{2}b_1(1+b_2+b_3) + \frac{1}{16}b_1(1+b_2-15b_3). \label{eq:2-upper-2}
\end{align}
Notice that $b<\frac{3}{4}a$ and $b_2\leq b_1$ imply $b_2\leq 3+3b_3$.
It follows from \eqref{eq:2-upper-2} that
\begin{align*}
w(T) < \frac{3}{2}b_1(1+b_2+b_3) - \frac{1}{4}b_1(3b_3-1) < \frac{3}{2}b_1(1+b_2+b_3) = \frac{3}{2}\pi(T).
\end{align*}

\subsection{$\mathbf{\textit{\textbf{d}}=4}$} \label{sec:d=234-4}

In this case, $a=1+b_2+b_3+b_4$ and $\pi(T)=b_1(1+b_2+b_3+b_4)$.
We consider the following sub-cases.
\begin{enumerate}[(i)]
\item $b_1\geq b_3+b_4$.
\item $b_1<b_3+b_4$ and $b_1b_4\geq b_2b_3$.
\item $b_1<b_3+b_4$, $b_1b_4<b_2b_3$ and $b_1\leq 4b_4$.
\item $b_1<b_3+b_4$, $b_1b_4<b_2b_3$ and $b_1> 4b_4$.
\end{enumerate}

\subsubsection{\textbf{Proof for sub-case (i)}}

We first obtain a new graph $T'$ from $T$ by removing the edge $\{r,s_4\}$ and adding the edge $\{s_3,s_4\}$, see Fig.~\ref{fig:T'} for an example of $T'$.
It is not hard to see that the edge $\hat{e}$ still maximizes the value $\ell_{T'}(e)$, which implies that $\pi(T')=b_1(1+b_2+b_3+b_4)=\pi(T)$.
Observe that $\deg_{T'}(r)=3$.
By the same argument in Section~\ref{sec:d=234-3} for $d=3$ case, we have
\begin{equation}
w(T') < \frac{3}{2}\pi(T') = \frac{3}{2}\pi(T).
\label{eq:T'-exchange-upper}
\end{equation}

\begin{figure}[h]
\centering
\includegraphics[width=5in]{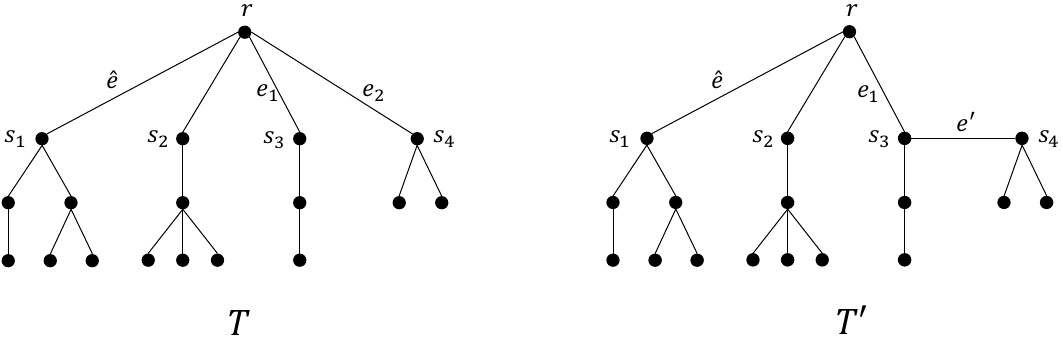}
\caption{$T'=T-\{r,s_4\}+\{s_3,s_4\}$.} \label{fig:T'}
\end{figure}

Denote by $B'_1,B'_2,B'_3$ the three connected components of $T'-r$ with $B'_1=B_1$ and $B'_2=B_2$.
Let $\phi'$ be the proper path-coloring of $T'$ given in the argument in Secion~\ref{sec:d=234-3}.
More precisely, divide all paths in $T'$ into 7 sets
\begin{align*}
\mathcal{P}'_1 &:=\{P'_{r,y}:\,y\in V(T')\setminus\{r\}\} \\
\mathcal{P}'_2 &:=\{P'_{x,y}:\,x\in V(B'_1), y\in V(B'_2)\} \\
\mathcal{P}'_3 &:=\{P'_{x,y}:\,x\in V(B'_1), y\in V(B'_3)\} \\
\mathcal{P}'_4 &:=\{P'_{x,y}:\,x\in V(B'_2), y\in V(B'_3)\} \\
\mathcal{P}'_5 &:=\{P'_{x,y}:\,x,y\in V(B'_1)\} \\
\mathcal{P}'_6 &:=\{P'_{x,y}:\,x,y\in V(B'_2)\} \\
\mathcal{P}'_7 &:=\{P'_{x,y}:\,x,y\in V(B'_3)\}
\end{align*}
and color the paths in $\mathcal{P}'_1$ with $\max\{|B'_1|,|B'_2|,|B'_3|\}=b_1$ colors, the paths in $\mathcal{P}'_2\cup\mathcal{P}'_7$ with $\chi(Q(\mathcal{P}'_2\cup\mathcal{P}'_7))$ colors, the paths in $\mathcal{P}'_3\cup\mathcal{P}'_6$ with $\chi(Q(\mathcal{P}'_3\cup\mathcal{P}'_6))$ colors, and the paths in $\mathcal{P}'_4\cup\mathcal{P}'_5$ with $\chi(Q(\mathcal{P}'_4\cup\mathcal{P}'_5))$ colors.
We use the superscript ``prime'' herein to emphasize the paths are considered in $T'$.
Note that, by the arguments in~\eqref{eq:2-upper-def}--\eqref{eq:2-upper}, the four classes of colors respectively used in $\mathcal{P}'_1$, $\mathcal{P}'_2\cup\mathcal{P}'_7$, $\mathcal{P}'_3\cup\mathcal{P}'_6$ and $\mathcal{P}'_4\cup\mathcal{P}'_5$ are mutually disjoint.
Note also that, for $1\leq i\leq 3$, all paths in $\{P'_{r,y}:\,y\in V(B'_i)\}\subset\mathcal{P}'_1$ receive distinct colors since they have the common edge $\{r,s_i\}$.

Define a path-coloring $\phi$ of $T$ by
\begin{subnumcases}{\phi(P_{x,y}) =}
\phi'(P'_{s_3,y}), & if $x=r$ and $y\in V(B_4)$; \label{eq:phi-1} \\
\phi'(P'_{r,y}), & if $x=s_3$ and $y\in V(B_4)$; \label{eq:phi-2} \\
\phi'(P'_{x,y}), & otherwise. \label{eq:phi-3}
\end{subnumcases}
$\phi$ is well-defined since it just exchanges the color of the path connecting $r$ and $y$ with that connecting $s_3$ and $y$, for any $y\in V(B_4)$.

\begin{lemma}\label{lemma:phi-proper}
The path-coloring defined on $T$ in \eqref{eq:phi-1}--\eqref{eq:phi-3} is proper.
\end{lemma}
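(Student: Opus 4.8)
The plan is to regard $\phi$ as the pullback $\phi=\phi'\circ\sigma$ of the proper coloring $\phi'$ along the involution $\sigma$ on vertex-pairs that interchanges $\{r,y\}$ and $\{s_3,y\}$ for every $y\in V(B_4)$ and fixes all other pairs; this is exactly the exchange described after \eqref{eq:phi-3}, and $\sigma$ is a bijection from the paths of $T$ onto the paths of $T'$. Since $\phi'$ is proper on $T'$, it suffices to prove that $\sigma$ carries conflicts to conflicts, i.e.\ that whenever two paths $P,\widetilde P$ of $T$ share an edge, their images $\sigma(P),\sigma(\widetilde P)$ share an edge of $T'$. I would record at the outset the one structural fact driving everything: $E(T)\setminus\{\{r,s_4\}\}=E(T')\setminus\{\{s_3,s_4\}\}$, so every edge other than the two ``swapped'' edges $\{r,s_4\}$ and $\{s_3,s_4\}$ is common to both trees.

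First I would dispose of the paths that never feel the modification. If a path of $T$ has no endpoint in $V(B_4)$, or both endpoints in $V(B_4)$, then it avoids $\{r,s_4\}$ and traverses exactly the same edge-set in $T$ and in $T'$; moreover every such pair is fixed by $\sigma$. Hence any conflict between two such paths transfers verbatim, as does any conflict whose shared edge lies in $B_1$, $B_2$, or inside $B_4$. This reduces the problem to conflicts that occur on an edge incident to the modified region, namely $\{r,s_4\}$ or $\{r,s_3\}$.

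The conflicts at $\{r,s_4\}$ are the clean case. The paths of $T$ through $\{r,s_4\}$ are precisely those with exactly one endpoint in $V(B_4)$; splitting them into the four types (second endpoint equal to $r$, equal to $s_3$, in $V(B_3)\setminus\{s_3\}$, or in $V(B_1)\cup V(B_2)$) and applying the clauses \eqref{eq:phi-1}--\eqref{eq:phi-3}, one checks that each image is a path of $T'$ through $\{s_3,s_4\}$. Thus $\sigma$ maps the family of $\{r,s_4\}$-paths bijectively onto the family of $\{s_3,s_4\}$-paths, and every conflict occurring at $\{r,s_4\}$ in $T$ reappears at $\{s_3,s_4\}$ in $T'$.

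The main obstacle is the edge $\{r,s_3\}$, where the routed-pair sets genuinely differ: in $T$ they are the pairs with exactly one endpoint in $V(B_3)$, whereas in $T'$ they are the pairs with exactly one endpoint in $V(B_3)\cup V(B_4)$. The swap is tailored to repair the pairs $\{s_3,y\}$ with $y\in V(B_4)$, whose images $P'_{r,y}$ still use $\{r,s_3\}$ in $T'$. The delicate paths are the un-swapped $B_3$--$B_4$ paths $P_{p,y}$ with $p\in V(B_3)\setminus\{s_3\}$ and $y\in V(B_4)$: in $T$ such a path uses $\{r,s_3\}$, but by \eqref{eq:phi-3} its image is the same-endpoint path $P'_{p,y}$, which in $T'$ reroutes through $\{s_3,s_4\}$ and no longer meets $\{r,s_3\}$. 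Two kinds of conflicts of $P_{p,y}$ transfer automatically: those with $B_4$-touching paths reappear at $\{s_3,s_4\}$ by the previous paragraph, and those with paths sharing the $B_3$-segment from $p$ to $s_3$ reappear on the corresponding common edges of $B_3$. What is left in doubt — and what I expect to be the crux — are the conflicts of $P_{p,y}$ with those members of the $\{r,s_3\}$-clique touching neither $V(B_4)$ nor that $B_3$-segment (for instance the short path $P_{s_3,r}$), since for these the only shared edge in $T$ is $\{r,s_3\}$, which the image loses. Showing that $\phi$ nonetheless separates such pairs is the heart of the lemma, and I expect it to be the place where one must exploit the finer structure of the coloring $\phi'$ inherited from the $d=3$ construction together with the sub-case hypothesis $b_1\ge b_3+b_4$.
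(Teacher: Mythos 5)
Your reduction of the lemma to the claim that the endpoint exchange $f$ (your $\sigma$) carries edge-conflicts in $T$ to edge-conflicts in $T'$ is exactly the paper's strategy, and your easy cases (pairs not meeting $V(B_4)$, conflicts transferred from $e_2=\{r,s_4\}$ to $e'=\{s_3,s_4\}$, conflicts surviving on common segments inside the branches) agree with the paper's first cases. But your proposal stops short of a proof at precisely the point you flag: you never show that $\phi$ separates an un-swapped path $P_{p,y}$, with $p\in V(B_3)\setminus\{s_3\}$ and $y\in V(B_4)$, from a path meeting it \emph{only} in $e_1=\{r,s_3\}$, such as $P_{r,s_3}$ or $P_{s_3,v}$ with $v\in V(B_1)\cup V(B_2)$. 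This is not a presentational gap, because conflict-transfer genuinely fails there: both pairs are fixed by $f$, yet in $T'$ the path $P'_{p,y}$ reroutes as $p\to\cdots\to s_3\to s_4\to\cdots\to y$ and loses $e_1$, while $P'_{s_3,v}$ consists of $e_1$, $\hat{e}$ (or $\{r,s_2\}$) and edges of $B_1$ (or $B_2$); the two images are edge-disjoint in $T'$. Nothing in the lemma's hypotheses prevents a proper $w(T')$-coloring $\phi'$ from assigning these two images one color, and if it does, $\phi$ is improper at $e_1$ (take, e.g., $b_3=2$ with $p$ the leaf of $B_3$ and $y=s_4$: $P_{p,s_4}$ versus $P_{s_3,s_1}$). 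Since $\phi'$ is an \emph{arbitrary} optimal coloring, your hope that the ``finer structure'' of $\phi'$ inherited from the $d=3$ construction rescues this case cannot be realized without adding hypotheses or changing the construction; as written, your argument does not establish the statement.

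What makes your attempt worth recording is that the case you isolate is also a hole in the paper's own proof. There, the both-fixed case is dismissed with the flat assertion ``$\mathcal{E}'=\mathcal{E}\neq\emptyset$'', which is false for your configuration: the pair $\{p,y\}$ is fixed as a \emph{pair} but not as an \emph{edge set}, and one has $\mathcal{E}=\{e_1\}$ while $\mathcal{E}'=\emptyset$; likewise the later enumerations ``$x=r$ or $x=s_3$'' silently exclude these rerouted but un-swapped $B_3$--$B_4$ paths. So neither your argument nor the paper's closes the lemma by pure conflict-preservation: any repair must do something extra --- for instance, extend the exchange to the whole $e_1$-clique (all pairs $\{q,y\}$ with $q\in V(B_3)$, $y\in V(B_4)$) rather than only the pairs through $r$ and $s_3$, or construct a specific $\phi'$ in which the at most $b_3b_4$ offending image paths avoid the colors used on the $e_1$-clique of $T'$, where the sub-case hypothesis $b_1\geq b_3+b_4$ would plausibly enter the counting. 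Absent such a device, the paper's proof and yours share the same genuine gap; yours at least names it honestly and locates it correctly.
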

\begin{proof}
Following the definition of $\phi$, we first define a mapping $f$ from $R$ to the routing of $T'$ as
\begin{align*}
f(P_{x,y}) = 
\begin{cases}
P'_{s_3,y}, & \text{if } x=r \text{ and } y\in V(B_4); \\
P'_{r,y}, & \text{if } x=s_3 \text{ and } y\in V(B_4); \\
P'_{x,y}, & \text{otherwise.}
\end{cases}
\end{align*}
As such, $\phi(P_{x,y})=\phi'(f(P_{x,y}))$.
By the structures of $T$ and $T'$, we have $f(P_{x,y})\neq P_{x,y}$ if and only if exactly one of $x,y$ is in $V(B_4)$, i.e., one endpoint is in $V(B_4)$ and the other is in $V(B_1)\cup V(B_2)\cup V(B_3)\cup\{r\}$.
For convenience, use $e_1,e_2$ and $e'$ to denote the edges $\{r,s_3\}$, $\{r,s_4\}$ and $\{s_3,s_4\}$, respectively, as shown in Fig.~\ref{fig:T'}.

Consider two paths in $T$, say $P_{x,y}$ and $P_{u,v}$, which have at least one common edge.
Let $\mathcal{E}$ (resp., $\mathcal{E}'$) denote the collection of common edges of $P_{x,y}$ and $P_{u,v}$ (resp., $f(P_{x,y})$ and $f(P_{u,v})$).
Notice that 
\begin{equation}\label{eq:proof-subcase-1}
\phi(P_{x,y})=\phi'(f(P_{x,y}))\neq\phi'(f(P_{u,v}))=\phi(P_{u,v}) \text{ whenever } \mathcal{E}'\neq\emptyset.
\end{equation}

When $f(P_{x,y})=P_{x,y}$ and $f(P_{u,v})=P_{u,v}$, one has $\mathcal{E}'=\mathcal{E}\neq\emptyset$, and thus the result follows by~\eqref{eq:proof-subcase-1}.
Consider the case when $f(P_{x,y})\neq P_{x,y}$ and $f(P_{u,v})\neq P_{u,v}$.
By assuming $y,v\in V(B_4)$, we have $x,u\in V(B_1)\cup V(B_2)\cup V(B_3)\cup\{r\}$.
For either case, it is no hard to check that $e'\in\mathcal{E}'$.
Hence $\mathcal{E}'\neq\emptyset$, and the result follows by~\eqref{eq:proof-subcase-1}.

Finally, we consider by symmetry that $f(P_{x,y})\neq P_{x,y}$ and $f(P_{u,v})=P_{u,v}$.
By assuming $y\in V(B_4)$, one has $x\in V(B_1)\cup V(B_2)\cup V(B_3)\cup\{r\}$.
When $x\in V(B_1)\cup V(B_2)$, the assumptions $\mathcal{E}\neq\emptyset$ and $f(P_{u,v})=P_{u,v}$ imply three possibilities of the locations of $u$ and $v$: $u,v\in V(B_1)\cup V(B_2)$, $u,v\in V(B_4)$, or one of them is in $V(B_1)\cup V(B_2)$ and the other is in $V(B_3)\cup\{r\}$.
For either case, one has $\mathcal{E}'=\mathcal{E}\neq\emptyset$. 
When $x=r$, the assumptions $\mathcal{E}\neq\emptyset$ and $f(P_{u,v})=P_{u,v}$ imply that $u,v\in V(B_4)$.
Then, we get $\mathcal{E}'=\mathcal{E}\neq\emptyset$.
Finally, consider the case when $x\in V(B_3)$.
By $\mathcal{E}\neq\emptyset$ and $f(P_{u,v})=P_{u,v}$ again, we have $u,v\in V(B_4)$ or $u,v\in V(B_3)\cup\{r\}$.
In the former case we have $\mathcal{E}'=\mathcal{E}\neq\emptyset$, so it suffices to consider the latter case.
If $e_1\notin\mathcal{E}$, the common edges in $\mathcal{E}$ must be in $B_3$, and thus $\mathcal{E}'=\mathcal{E}\neq\emptyset$.
Otherwise, if $e_1\in\mathcal{E}$, this indicates that $r$ is an endpoint of $P_{u,v}$, which implies that $f(P_{u,v})\in\mathcal{P}'_1$.
Indeed, $f(P_{u,v})=P'_{r,v}$ and $v\in V(B'_3)$, by assuming $u=r$.
There are two sub-cases: $x=s_3$ or $x\in V(B_3)\setminus\{s_3\}$.
If $x=s_3$, then $f(P_{x,y})=P'_{r,y}$, where $y\in V(B'_3)$.
We get $\mathcal{E}'\neq\emptyset$ since $\{r,s_3\}$ is a common edge of $f(P_{x,y})$ and $f(P_{u,v})$.
If $x\in V(B_3)\setminus\{s_3\}$, since $y\in V(B_4)$, one has $x,y\in V(B'_3)$ and then $f(P_{x,y})\in\mathcal{P}'_7$.
By the arguments in~\eqref{eq:2-upper-def}--\eqref{eq:2-upper}, the definition of $\phi'$ assuming the colors in $\mathcal{P}'_1$ are different from those in $\mathcal{P}'_7$, hence we have 
\begin{align*}
\phi(P_{x,y})=\phi'(f(P_{x,y}))\neq\phi'(f(P_{u,v}))=\phi(P_{u,v}).
\end{align*}
This completes the proof.
\end{proof}

We remark here that one key point in the proof of Lemma~\ref{lemma:phi-proper} is the assumption that in $\phi'$, the color on any path with one endpoint $r$ and the other in $V(B'_3)$ is different from the color on any path with two endpoints in $V(B'_3)$.

Lemma~\ref{lemma:phi-proper} guarantees that $w(T)\leq w(T')$.
Hence the result follows by \eqref{eq:T'-exchange-upper}.

\subsubsection{\textbf{Proof for sub-cases (ii) and (iii)}} \label{sec:d=234-4-23}

We consider (ii) and (iii) simultaneously.
The paths in $R$ are classified into the following 5 classes.
\begin{itemize}
\item $\mathcal{P}_1:=\{P_{r,y}:\, y\in V(T)\setminus\{r\}\}$. 
\item $\mathcal{P}_2:=\{P_{x,y}:\, x,y\in V(B_i) \text{ for some } i, 1\leq i\leq 4\}$.
\item $\mathcal{P}_3:=\{P_{x,y}:\, x\in V(B_1) \text{ and } y\in V(B_2) \text { or } x\in V(B_3) \text{ and } y\in V(B_4)\}$.
\item $\mathcal{P}_4:=\{P_{x,y}:\, x\in V(B_1) \text{ and } y\in V(B_3) \text { or } x\in V(B_2) \text{ and } y\in V(B_4)\}$.
\item $\mathcal{P}_5:=\{P_{x,y}:\, x\in V(B_1) \text{ and } y\in V(B_4) \text { or } x\in V(B_2) \text{ and } y\in V(B_3)\}$.
\end{itemize}
Similar to the argument in \eqref{eq:2-upper} of Section~\ref{sec:d=234-3} for $d=3$ case, we have
\begin{align*}
w(T) \leq~~ & b_1 + \max\big\{\frac{3}{8}b_1^2,\frac{3}{8}b_2^2,\frac{3}{8}b_3^2,\frac{3}{8}b_4^2\big\} + \max\{b_1b_2,b_3b_4\} \\
&+ \max\{b_1b_3,b_2b_4\} + \max\{b_1b_4,b_2b_3\}. 
\end{align*}
By the assumption that $b_1\geq b_2\geq b_3\geq b_4$, the above inequality can be simplified as
\begin{equation}
w(T) \leq b_1 + \frac{3}{8}b_1^2 + b_1b_2 + b_1b_3 + \max\{b_1b_4,b_2b_3\}.
\label{eq:4-upper}
\end{equation}

Consider (ii): $b_1<b_3+b_4$ and $b_1b_4\geq b_2b_3$. 
It follows from \eqref{eq:4-upper} and $b<\frac{3}{4}a$ that
\begin{align*}
w(T) &\leq b_1 + \frac{3}{8}b_1^2 + b_1b_2 + b_1b_3 + b_1b_4 \\ 
&< b_1(1+b_2+b_3+b_4) + \frac{3}{8}b_1\cdot\frac{3}{4}(1+b_2+b_3+b_4) \\ 
&=\frac{41}{32}b_1(1+b_2+b_3+b_4) < \frac{3}{2}b_1(1+b_2+b_3+b_4) \\
&= \frac{3}{2}\pi(T).
\end{align*}

Consider (iii): $b_1<b_3+b_4$, $b_1b_4<b_2b_3$ and $b_1\leq 4b_4$.
It follows from \eqref{eq:4-upper} that
\begin{align}
w(T) &\leq b_1 + \frac{3}{8}b_1^2 + b_1b_2 + b_1b_3 + b_2b_3 \notag \\ 
&= \frac{3}{2}b_1(1+b_2+b_3+b_4) + \frac{3}{8}b_1^2 + b_2b_3 - \frac{1}{2}b_1 - \frac{1}{2}b_1b_2 - \frac{1}{2}b_1b_3 - \frac{3}{2}b_1b_4 \label{eq:(iii)-1} \\ 
&\leq \frac{3}{2}b_1(1+b_2+b_3+b_4) + \frac{3}{8}b_1(b_1 - 4b_4 - \frac{4}{3}) \label{eq:(iii)-2} \\
&< \frac{3}{2}b_1(1+b_2+b_3+b_4) \label{eq:(iii)-3} \\
&= \frac{3}{2}\pi(T), \notag
\end{align}
where inequalities \eqref{eq:(iii)-2} and \eqref{eq:(iii)-3} are due to $2b_2b_3\leq b_1 b_2+ b_1b_3$ and $b_1\leq 4b_4$, respectively.

\subsubsection{\textbf{Proof for sub-case (iv)}} \label{sec:d=234-4-4}

Finally, consider (iv): $b_1<b_3+b_4$, $b_1b_4<b_2b_3$ and $b_1> 4b_4$.
The paths in $R$ are classified into the following 7 classes.
\begin{itemize}
\item $\mathcal{P}_1:=\{P_{r,y}:\, y\in V(T)\setminus\{r\}\}$. 
\item $\mathcal{P}_2:=\{P_{x,y}:\, x\in V(B_1) \text{ and } y\in V(B_2)\}$.
\item $\mathcal{P}_3:=\{P_{x,y}:\, x,y\in V(B_3)\cup V(B_4)\}$.
\item $\mathcal{P}_4:=\{P_{x,y}:\, x\in V(B_1) \text{ and } y\in V(B_3)\}$.
\item $\mathcal{P}_5:=\{P_{x,y}:\, x,y\in V(B_2)\cup V(B_4)\}$.
\item $\mathcal{P}_6:=\{P_{x,y}:\, x\in V(B_2) \text{ and } y\in V(B_3)\}$.
\item $\mathcal{P}_7:=\{P_{x,y}:\, x,y\in V(B_1)\cup V(B_4)\}$.
\end{itemize}

Since $b_1\geq b_2\geq b_3\geq b_4$ and each path $P_{r,y}$ with $y\in V(B_1)$ contains the edge $\{r,s_1\}$, we have $\chi(Q(\mathcal{P}_1))=b_1$.
As each path in $\mathcal{P}_2$ contains the two edges $\{r,s_1\},\{r,s_2\}$, it is clear that $\chi(Q(\mathcal{P}_2))=b_1b_2$.
Similarly, we have $\chi(Q(\mathcal{P}_4))=b_1b_3$, and $\chi(Q(\mathcal{P}_6))=b_2b_3$.
Let $\widehat{B}$ be the tree obtained from the union of $B_3$ and $B_4$ by adding an extra edge connecting $s_3$ and $s_4$.
It is easy to see that $w(\widehat{B})=\chi(Q(\mathcal{P}_3))$. 
By the induction hypothesis that $w(\widehat{B})<\frac{3}{2}\pi(\widehat{B})$, it follows from Proposition~\ref{prop:cap-upper} that 
\begin{align*}
\chi(Q(\mathcal{P}_3)) = w(\widehat{B}) <\frac{3}{2}\pi(\widehat{B}) \leq \frac{3}{8}(b_3+b_4)^2.
\end{align*}
Furthermore, since any two paths, one in $\mathcal{P}_2$ and another in $\mathcal{P}_3$, can receive the same color, we have 
\begin{equation}
\chi(Q(\mathcal{P}_2\cup\mathcal{P}_3)) = \max\big\{\chi(Q(\mathcal{P}_2)),\chi(Q(\mathcal{P}_3))\big\} \leq \max\big\{b_1b_2,\frac{3}{8}(b_3+b_4)^2\big\}.
\label{eq:d=4-4-P2P3}
\end{equation}
By the same argument, one has
\begin{equation}
\chi(Q(\mathcal{P}_4\cup\mathcal{P}_5)) \leq \max\big\{b_1b_3,\frac{3}{8}(b_2+b_4)^2\big\}
\label{eq:d=4-4-P4P5}
\end{equation}
and
\begin{equation}
\chi(Q(\mathcal{P}_6\cup\mathcal{P}_7)) \leq \max\big\{b_2b_3,\frac{3}{8}(b_1+b_4)^2\big\}.
\label{eq:d=4-4-P6P7}
\end{equation}
Combining $\chi(Q(\mathcal{P}_1))=b_1$ and equations \eqref{eq:d=4-4-P2P3}--\eqref{eq:d=4-4-P6P7} yields
\begin{align}
w(T) \leq & \chi(Q(\mathcal{P}_1)) + \sum_{i=1}^3 \chi\left(Q\left(\mathcal{P}_{2i}\cup\mathcal{P}_{2i+1}\right)\right) \notag \\
\leq & b_1 + \max\big\{b_1b_2,\frac{3}{8}(b_3+b_4)^2\big\} + \max\big\{b_1b_3,\frac{3}{8}(b_2+b_4)^2\big\} \notag \\
& \, \ \ + \max\big\{b_2b_3,\frac{3}{8}(b_1+b_4)^2\big\}. \label{eq:4-4-upper-1}
\end{align}

As $b_1<b_3+b_4$ and $b_1>4b_4$ implying $b_4<\frac{1}{3}b_3$ and $b_1<\frac{4}{3}b_3$, by $b_4\leq b_3\leq b_2\leq b_1$ we have
\begin{equation}
\frac{3}{8}(b_3+b_4)^2 < \frac{3}{8}(b_1+\frac{1}{4}b_1)(b_2+b_2) = \frac{15}{16}b_1b_2,
\label{eq:d=4-4-P3-chi-1}
\end{equation}
and
\begin{align}
\frac{3}{8}(b_2+b_4)^2 &< \frac{3}{8}(b_1+\frac{1}{4}b_1)^2 = \frac{75}{128}b_1^2 \notag \\
&< \frac{75}{128}b_1\cdot\frac{4}{3}b_3 = \frac{25}{32}b_1b_3. \label{eq:d=4-4-P5-chi-1}
\end{align}
By plugging \eqref{eq:d=4-4-P3-chi-1} and\eqref{eq:d=4-4-P5-chi-1} into \eqref{eq:4-4-upper-1}, we have
\begin{equation}
w(T) \leq b_1 + b_1b_2 + b_1b_3 + \max\big\{b_2b_3,\frac{3}{8}(b_1+b_4)^2\big\}.
\label{eq:4-4-upper-2}
\end{equation}

If $b_2b_3\geq\frac{3}{8}(b_1+b_4)^2$, it follows \eqref{eq:4-4-upper-2} that
\begin{align*}
w(T) & \leq b_1 + b_1b_2 + b_1b_3 + b_2b_3 \\
& = \frac{3}{2}b_1\big( 1 + b_2 + b_3 + b_4 \big) + \big( b_2b_3 - \frac{1}{2}b_1(b_2+b_3) \big)- \frac{1}{2}b_1 - \frac{3}{2} b_1b_4 \\
& \leq \frac{3}{2}\pi(T) - \frac{1}{2}b_1 - \frac{3}{2} b_1b_4 \\
& < \frac{3}{2}\pi(T).
\end{align*} 

Else, if $b_2b_3<\frac{3}{8}(b_1+b_4)^2$, it follows \eqref{eq:4-4-upper-2} again that 
\begin{align}
w(T) &\leq b_1 + b_1b_2 + b_1b_3 + \frac{3}{8}(b_1+b_4)^2 \notag \\
&= \frac{3}{2}b_1(1+b_2+b_3+b_4) - \frac{1}{2}b_1 - \frac{1}{2}b_1b_2 - \frac{1}{2}b_1b_3 - \frac{3}{4}b_1b_4 + \frac{3}{8}b_1^2 + \frac{3}{8}b_4^2 \notag \\
&= \frac{3}{2}\pi(T) - \frac{1}{2}b_1 - \frac{1}{2}b_1b_2 + \frac{3}{8}b_1\big(b_1-\frac{4}{3}b_3\big) + \frac{3}{8}b_4\big(b_4-2b_1\big) \notag \\
&< \frac{3}{2}\pi(T) - \frac{1}{2}b_1 - \frac{1}{2}b_1b_2 \label{eq:4-4-upper} \\
&< \frac{3}{2}\pi(T), \notag
\end{align}
where \eqref{eq:4-4-upper} is due to $b_1<\frac{4}{3}b_3$ and $b_4\leq b_1$.

\section{Proof of Theorem~\ref{thm:main} for $b<\frac{3}{4}a$ and $d\geq 5$} \label{sec:d>=5}

We consider three scenarios: (i) $b_1\geq b_{d-1}+b_{d}$; (ii) $b_1< b_{d-1}+b_{d}$ and $b_1=b_2=\cdots=b_d$; and (iii) $b_1< b_{d-1}+b_{d}$ and $b_1=b_2=\cdots=b_k>b_{k+1}\geq\cdots\geq b_d$ for some $k$ with $1\leq k\leq d-1$.

The first case can be dealt with by a similar argument used in the proof of Lemma~\ref{lemma:phi-proper}.
More precisely, since $b<\frac{3}{4}a$ and $d\geq 5$, there exists $3\leq t\leq d-1$ such that  
\begin{equation}\label{eq:generalLemma3-t}
b_t+b_{t+1}+\cdots+b_d \leq b_1 < b_{t-1}+b_t+b_{t+1}+\cdots+b_d.
\end{equation}
We can obtain a new tree $T'$ from $T$ by removing the edges $\{r,s_{t+1}\},\{r,s_{t+2}\},\ldots,\{r,s_d\}$ and adding the edges $\{s_t,s_{t+1}\},\{s_t,s_{t+2}\},\ldots,\{s_t,s_d\}$, see Fig.~\ref{fig:T'-general} for an example of $T'$.

\begin{figure}[h]
\centering
\includegraphics[width=4.7in]{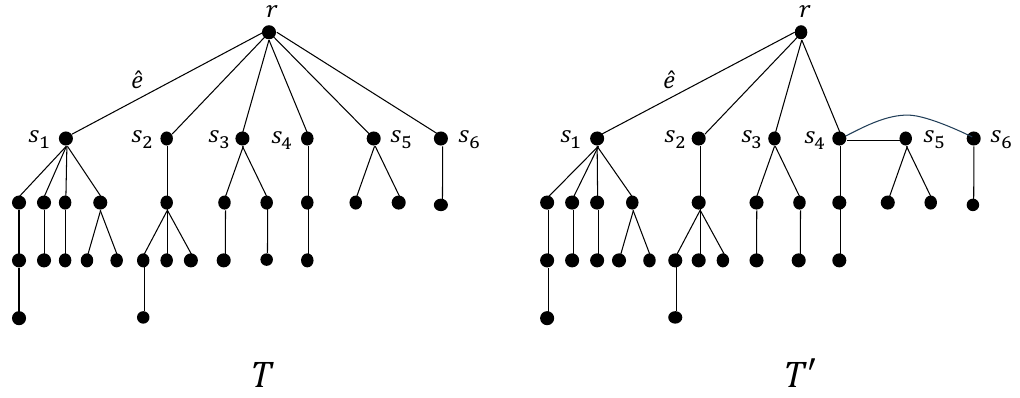}
\caption{$t=4:\ T'=T-\{r,s_5\}-\{r,s_6\}+\{s_4,s_5\}+\{s_4,s_6\}$.} \label{fig:T'-general}
\end{figure}

It is not hard to see that $\hat{e}$ still maximizes the value $\ell_{T'}(e)$ and $\pi(T')=\pi(T)$.
Observe that $T'$ is a tree rooted by $r$ with $t\geq 3$ branches.
Let $B'_1=B_1,\ldots,B'_{t-1}=B_{t-1}$ and $B'_{t}$ be the $t$ connected components of $T'-r$, and denoted by $b'_i=|V(B'_i)|$ for $i=1,2,\ldots,t$.
It follows from~\eqref{eq:generalLemma3-t} that $b'_1<b'_{t-1}+b'_t$.
Let $\phi'$ be the proper path-coloring of $T'$ given in Section~\ref{sec:d=234-3}, Section~\ref{sec:d=234-4-23}, Section~\ref{sec:d=234-4-4} or the one that will be proposed later in Section~\ref{sec:d>=5-2} or Section~\ref{sec:d>=5-3}, according to the number of branches, $t$.
We have 
\begin{equation}\label{eq:generalLemma3-proof-1}
w(T')<\frac{3}{2}\pi(T')=\frac{3}{2}\pi(T).
\end{equation}
In either case, the proposed path-coloring $\phi'$ has the property that 
\begin{equation}\label{eq:generalLemma3-proof-2}
\{\phi'(P_{x,y}):\,x,y\in V(B'_t)\} \cap \{\phi'(P_{r,v}):\,v\in V(B'_t)\} = \emptyset.
\end{equation}
Moreover, $|\{\phi'(P_{r,v}):v\in V(B'_t)\}|=b'_t$, namely, the colors on the paths in this set are all distinct, since $\{r,s_t\}$ is a common edge of all these paths.
Now, define a path-coloring $\phi$ of $T$ by
\begin{align*}
\phi(P_{x,y}) = \begin{cases}
	\phi'(P'_{s_t,y}), & \text{if } x=r \text{ and } y\in V(B_{t+1})\cup\cdots\cup V(B_{d}); \\
 	\phi'(P'_{r,y}), & \text{if } x=s_t \text{ and } y\in V(B_{t+1})\cup\cdots\cup V(B_{d});  \\
	\phi'(P'_{x,y}), & \text{otherwise}.
\end{cases}
\end{align*}
By the similar argument in the proof of Lemma~\ref{lemma:phi-proper}, it is routine to verify that $\phi$ is proper.
Hence we have $w(T)<\frac{3}{2}\pi(T)$ by \eqref{eq:generalLemma3-proof-1}.


The following subsections are devoted to the rest two cases.
Note that, since $d\geq 5$, the condition $b_1<b_{d-1}+b_d$ implies that $b<a/2$.



\subsection{$\boldsymbol{b_1< b_{d-1}+b_{d}}$ \textbf{and} $\boldsymbol{b_1=b_2=\cdots=b_d}$} \label{sec:d>=5-2}
In this case, $\pi(T)=b(bd-b+1)$.
Consider the following classification of $R$.
\begin{itemize}
\item $\mathcal{P}_i:=\big\{P_{x,y}:\, x,y\in V(B_i)\cup\{r\}\big\}$, for $i=1,2,\ldots,d$; and
\item $\mathcal{P}_{(i,j)}:=\{P_{x,y}:\,x\in V(B_i) \text{ and } y\in V(B_j)\}$, for $1\leq i<j \leq d$.
\end{itemize}

First, assume $d$ is even. 
For $i\neq i'$, any two paths, one in $\mathcal{P}_i$ and another in $\mathcal{P}_{i'}$, can receive the same color.
Then, by the induction hypothesis that $w(B_1+r)<\frac{3}{2}\pi(B_1+r)$ and Proposition~\ref{prop:cap-upper} we have
\begin{align*}
\chi\left(Q\left(\bigcup_{i=1}^d\mathcal{P}_i\right)\right) = \chi(Q(\mathcal{P}_1)) = w(B_1+r) < \frac{3}{2}\pi(B_1+r) \leq \frac{3}{8}(b+1)^2,
\end{align*}
$B_1+r$ refers to the induced subgraph of $T$ by the vertex set $V(B_1)\cup\{r\}$.

Recall that the chromatic index of a complete graph of order $d$ is $d-1$ when $d$ is even.
Let $K_d$ be a complete graph of $d$ vertices labelled $1,2,\ldots,d$, and let $f:E(K_d)\to\{1,2,\ldots,d-1\}$ be a proper $(d-1)$-edge-coloring of $K_d$.
For $t=1,2,\ldots,d-1$, denote by $C_t$ the collection of ordered pairs $(i,j)$ such that $f(\{i,j\})=t$, where $i<j$.
Any two paths, one in $\mathcal{P}_{(i,j)}$ and another in $\mathcal{P}_{(i',j')}$, can receive the same color if $(i,j)$ and $(i',j')$ are distinct and both in $C_t$ for some $t$.
This implies that, for any $t$,
\begin{align*}
\chi\left(Q\left(\bigcup_{(i,j)\in C_t}\mathcal{P}_{(i,j)}\right)\right) = \chi\left(Q(\mathcal{P}_{(i,j)})\right) = |\mathcal{P}_{(i,j)}| = b^2.
\end{align*}
To sum up, we have
\begin{align*}
w(T) &\leq \chi\left(Q\left(\bigcup_{i=1}^d\mathcal{P}_i\right)\right) + \sum_{t=1}^{d-1} \chi\left(Q\left(\bigcup_{(i,j)\in C_t}\mathcal{P}_{(i,j)}\right)\right) \\
&< \frac{3}{8}(b+1)^2 + (d-1)b^2 < \frac{3}{2}b(bd-b+1) = \frac{3}{2}\pi(T).
\end{align*}

Second, assume $d$ is odd.
Recall that the total-chromatic number of a graph $G$ is the minimum integer $k$ needed to guarantee the existence of a mapping from $V(G)\cup E(G)$ to a set of $k$ colors such that (i) adjacent vertices receive distinct colors, (ii) adjacent edges receive distinct colors, and (iii) any vertex and its incident edges receive distinct colors.
The total-chromatic number of $K_d$ is known to be $d$ when $d$ is odd, see~\cite[p.16]{Yap96}.

For convenience, label the set of vertices in $K_d$ by $1,2,\ldots,d$.
Let $f:V(K_d)\cup E(K_d)\to\{1,2,\ldots,d\}$ be a proper $d$-total-coloring of $K_d$ such that $f(t)=t$ for any $t\in V(K_d)$.
For $i=1,2,\ldots,d$, we further decompose $\mathcal{P}_i$ into two classes of paths 
\begin{align*}
\mathcal{P}^{(r)}_i:=\{P_{r,y}:\,y\in V(B_i)\} \quad \text{and} \quad \mathcal{P}^{(B)}_i:=\{P_{x,y}:\,x,y\in V(B_i)\}.
\end{align*}
Observe that $\chi(Q(\mathcal{P}^{(r)}_i))=b$.
By a similar argument, for $t=1,2,\ldots,d$, one has
\begin{align*}
\chi\left(Q\left(\mathcal{P}_t\cup\bigcup_{f(\{i,j\})=t}\mathcal{P}_{(i,j)}\right)\right) &= \chi\left(Q\left(\Big(\mathcal{P}^{(r)}_t\cup\mathcal{P}^{(B)}_t\Big)\cup\bigcup_{f(\{i,j\})=t}\mathcal{P}_{(i,j)}\right)\right) \\ 
&\leq \max\Big\{b+\frac{3}{8}b^2, b^2\Big\},
\end{align*}
which is equal to $b^2$ whenever $b\geq 2$.
Therefore, for $b\geq 2$,
\begin{align*}
w(T) \leq \sum_{t=1}^d \chi\left(Q\left(\mathcal{P}_t\cup\bigcup_{f(\{i,j\})=t}\mathcal{P}_{(i,j)}\right)\right) \leq d\cdot b^2 < \frac{3}{2}b(bd-b+1) = \frac{3}{2}\pi(T).
\end{align*}
For the exceptional case that $b=1$, the tree $T$ is a star with $d$ leaves.
By the proper $d$-total-coloring method, we have $w(T)=d$.
Then, the assertion in \eqref{eq:main} holds since $\pi(T)=d$.

It is worth mentioning that in the case when $d$ is odd, \cite[Lemma 5]{FLWZ17} claimed that $w(T)=d\cdot b^2$.
The result $w(T)<\frac{3}{2}\pi(T)$ just follows from $\pi(T)=b(bd-b+1)$ and the assumption $d\geq 5$.
However, to make sure that the proposed path-coloring satisfies the condition in~\eqref{eq:generalLemma3-proof-2}, we still have to write down the concrete path-coloring as shown above.

\medskip

\subsection{$\boldsymbol{b_1< b_{d-1}+b_{d}}$ \textbf{and} $\boldsymbol{b_1=b_2=\cdots=b_k>b_{k+1}}$ \textbf{for some} $\boldsymbol{k}$ \textbf{with} $\boldsymbol{1\leq k\leq d-1}$} \label{sec:d>=5-3}

Construct a tree $T'$ from $T$ by removing a leave $u_i$ from $B_i$, where $u_i$ is arbitrarily chosen, for $i=1,2,\ldots,k$.
Let $B_i'=B_i-u_i$ for $i=1,2,\ldots,k$.
See Fig.~\ref{fig:T'-delete} for an example of $T$ and $T'$ with $d=5$ and $k=3$.

\begin{figure}[h]
\centering
\includegraphics[width=5.5in]{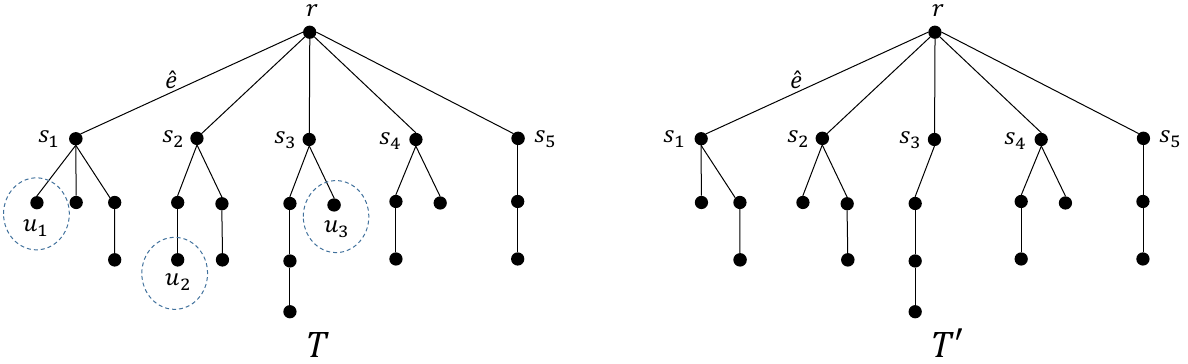}
\caption{$T'=T-u_1-u_2-u_3$, where $u_i$ is an arbitrarily chosen leaf for $i=1,2,3$.} \label{fig:T'-delete}
\end{figure}

Notice that $T'-r$ has $d$ branches $B'_1,\ldots,B'_k,B_{k+1},\ldots,B_d$.
We now claim that $\hat{e}$ still maximizes the value $\ell_{T'}(e)$ among all edges in $T'$, that is, $\ell_{T'}(\hat{e})=\pi(T')$.
Suppose to the contrary that $\ell_{T'}(\hat{e}')>\ell_{T'}(\hat{e})$ for some $\hat{e}'\neq\hat{e}$ in $T'$.
Observe that $\hat{e}'$ can not be in any connected component in $T'-r$, otherwise, we will get $\ell_{T}(\hat{e}')>\ell_{T}(\hat{e})$ due to the structures of $T$ and $T'$, which is a contradiction to $\ell_T(\hat{e})=\pi(T)$.
Then, assume $\hat{e}'=\{r,s_h\}$ for some $h>k$.
The condition $h>k$ is due to $|V(B'_1)|=\cdots=|V(B'_k)|\geq|V(B_{k+1})|\geq\cdots\geq|V(B_d)|$.
By the same reason, we have $\ell_{T'}(\hat{e})=\ell_{T'}(\{r,s_1\})=\cdots\ell_{T'}(\{r,s_k\})\geq\ell_{T'}(\{r,s_h\})=\ell_{T'}(\hat{e}')$, a contradiction occurs.

By the induction hypothesis that $w(T')<\frac{3}{2}\pi(T)$, we have
\begin{align}
w(T') &< \frac{3}{2}\pi(T') = \frac{3}{2}(b_1-1)\big(1+(b_2-1)+\cdots+(b_k-1)+b_{k+1}+\cdots+b_d\big) \label{eq:T'-upper1} \\
&= \frac{3}{2}b_1(1+b_2+\cdots+b_d) - \frac{3}{2}\big((k-1)b_1+b_2+b_3+\cdots+b_d-(k-2)\big) \label{eq:T'-upper2} \\
&= \frac{3}{2}\pi(T) - 3(k-1)b_1 - \frac{3}{2}(b_{k+1}+b_{k+2}+\cdots+b_d) + \frac{3}{2}(k-2), \label{eq:T'-upper}
\end{align}
where the last equation is due to $b_1=b_2=\cdots=b_k$.
Let $R'$ be the all-to-all routing of $T'$.
As $w(T)\leq w(T')+\chi(Q(R\setminus R'))$, we consider the remaining paths in $R\setminus R'$.

When $k=1$, $R\setminus R'=\big\{P_{u_1,y}:\,y\in V(T)\setminus\{u_1\}\big\}$.
By assigning one new color to each path in $R\setminus R'$, it follows from \eqref{eq:T'-upper} and the assumption $b<a/2$ that
\begin{align*}
w(T) &\leq w(T') + b_1 + b_2 + \cdots + b_d \\
&< \frac{3}{2}\pi(T) - \frac{3}{2}(b_2+b_3+\cdots+b_d) - \frac{3}{2} + (b_1 + b_2 + \cdots + b_d) \\
&= \frac{3}{2}\pi(T) + b_1 - \frac{1}{2}(1+b_2+b_3+\cdots+b_d) - 1 \\
&< \frac{3}{2}\pi(T).
\end{align*}

When $k=2$, $R\setminus R'$ is divided into the following classes.
\begin{itemize}
\item $\mathcal{P}_1:=\big\{P_{x,y}:\,x=u_1,y\in V(B'_1)\cup\{r\} \text{ or } x=u_2,y\in V(B'_2)\cup\{r\}\big\}$.
\item $\mathcal{P}_2:=\{P_{x,y}:\,x=u_1,y\in V(B'_2) \text{ or } x=u_2,y\in V(B'_1) \text{ or } x=u_1,y=u_2 \big\}$.
\item $\mathcal{P}_i:=\{P_{x,y}:\,x=u_1,y\in V(B_i) \text{ or } x=u_2,y\in V(B_{i+1})\}$, for $i=3,4,\ldots, d-1$.
\item $\mathcal{P}_d:=\{P_{x,y}:\,x=u_1,y\in V(B_d) \text{ or } x=u_2,y\in V(B_3)\}$.
\end{itemize}
One can check that $\chi(Q(\mathcal{P}_1))=b_1$, $\chi(Q(\mathcal{P}_2))=|\mathcal{P}_2|=2b_1-1$, $\chi(Q(\mathcal{P}_d))=b_3$, and $\chi(Q(\mathcal{P}_i))=b_i$ for $3\leq i\leq d-1$.
It follows from \eqref{eq:T'-upper} that
\begin{align}
w(T) &\leq w(T') + \sum_{i=1}^d \chi(Q(\mathcal{P}_i)) \notag \\
&\leq w(T') +3b_1+2b_3+b_4+b_5+\cdots+b_{d-1}-1 \notag \\
&< \frac{3}{2}\pi(T) - 3b_1 - \frac{3}{2}(b_3+\cdots+b_d) + 3b_1+2b_3+b_4+b_5+\cdots+b_{d-1}-1 \notag \\
&= \frac{3}{2}\pi(T) + \frac{1}{2}(b_3-b_4-b_5-\cdots-b_{d-1}-3b_d)  - 1 \notag \\
&< \frac{3}{2}\pi(T), \label{eq:d>4-k=2}
\end{align}
where \eqref{eq:d>4-k=2} is due to $b_{d-1}+b_d>b_1$ and $d\geq 5$.

\smallskip
When $k\geq 3$ and $k$ is odd, $R\setminus R'$ is divided into the following classes.
\begin{itemize}
\item $\mathcal{P}_{(i,j)}:=\big\{P_{u_i,y}:\,y\in V(B'_j)\big\}$, for $1\leq i,j\leq k$.
\item $\mathcal{P}_0:=\big\{P_{u_i,r}:\,1\leq i \leq k\big\}$.
\item $\mathcal{P}_i:=\big\{P_{u_i,y}:\,y\in V(B_{k+1})\big\}$, for $1\leq i\leq k$.
\item $\mathcal{P}_\infty:=\big\{P_{u_i,u_j}:\,1\leq i\neq j \leq k\big\}$.
\item $\widehat{\mathcal{P}}_{(i,j)}:=\big\{P_{u_i,y}:\,y\in V(B_j)\big\}$, for $1\leq i\leq k$ and $k+2\leq j\leq d$.
\end{itemize}
Note that $\mathcal{P}_{(i,i)}$ refers to the collection of paths connecting $u_i$ and vertices in $B'_i$.
We remark here that $|\mathcal{P}_0|=k$, $|\mathcal{P}_\infty|={k\choose 2}$, and $|\mathcal{P}_{(i,j)}|=b_1-1$, $|\mathcal{P}_i|=b_{k+1}\leq b_1-1$, $|\widehat{\mathcal{P}}_{(i,j)}|=b_j$ for all suitable $i$ and $j$.

Recall that the total-chromatic number of $K_k$ is $k$ when $k$ is odd.
Let $K_k$ be a complete graph of $k$ vertices labelled $1,2,\ldots,k$, and let $f:V(K_k)\cup E(K_k)\to\{1,2,\ldots,k\}$ be a proper $k$-total-coloring of $K_{k}$.
For $t=1,2,\ldots,k$ denote by $C_t$ the collection of vertices and edges who receive color $t$ under $f$.
Notice that each $C_t$ contains exactly one vertex and $\frac{k-1}{2}$ edges.
For the sake of argument, we assume vertex $t\in C_t$.
For any $t$, construct two sets $\mathcal{O}_t$ and $\mathcal{O}^r_t$ as follows: For each edge $\{i,j\}\in C_t$, $i<j$, put $(i,j)$ and $(j,i)$ into $\mathcal{O}_t$ and $\mathcal{O}^r_t$, respectively.

Pick two paths, one in $\mathcal{P}_{(i,j)}$ and another in $\mathcal{P}_{(i',j')}$, they can receive the same color if $\{i,j\}\cap\{i',j'\}=\emptyset$.
For any $t$, since $t\notin\{i,j\}$ and $\{i,j\}\cap\{i',j'\}=\emptyset$ for any $(i,j),(i',j')\in\mathcal{O}_t$, we have
\begin{align*}
\chi\left( Q\left( \mathcal{P}_{(t,t)} \cup \bigcup_{(i,j)\in\mathcal{O}_t} \mathcal{P}_{(i,j)} \right) \right) \leq \max\left\{|\mathcal{P}_{(t,t)}|,|\mathcal{P}_{(i,j)}| \right\}_{(i,j)\in\mathcal{O}_t} = b_1-1.
\end{align*}
Since paths in $\mathcal{P}_t$ have no common edges with paths in $\mathcal{P}_{(j,i)}$, for any $(j,i)\in\mathcal{O}^r_t$, by the similar argument we have
\begin{align*}
\chi\left( Q\left( \mathcal{P}_t \cup \bigcup_{(j,i)\in\mathcal{O}^r_t} \mathcal{P}_{(j,i)} \right) \right) \leq \max\left\{|\mathcal{P}_t|,|\mathcal{P}_{(j,i)}| \right\}_{(j,i)\in\mathcal{O}^r_t} = b_1-1.
\end{align*}
By going through $t$ from $1$ up to $k$, it derives
\begin{equation}
\chi \left( Q \left( \bigcup_{1\leq t\leq k} \mathcal{P}_t \cup \bigcup_{1\leq i,j\leq k} \mathcal{P}_{(i,j)} \right) \right) \leq 2k(b_1-1).
\label{eq:d>4-k=odd_1}
\end{equation}

The paths in $\mathcal{P}_0\cup\mathcal{P}_\infty$ can be dealt with in the same way.
Any two paths in $$\big\{P_{t,r}\big\}\cup \big\{P_{u_i,u_j}:\,\{i,j\}\in C_t\big\}$$ have no common edges.
This implies that
\begin{equation}
\chi \left( Q \left( \mathcal{P}_0\cup\mathcal{P}_\infty \right) \right) = \chi \left( Q \left( \bigcup_{1\leq t\leq k} \left( \big\{P_{t,r}\big\}\cup \big\{P_{u_i,u_j}:\,\{i,j\}\in C_t\big\} \right) \right) \right)  \leq k.
\label{eq:d>4-k=odd_2}
\end{equation}

It remains to consider paths in $\widehat{\mathcal{P}}_{(i,j)}$, for $1\leq i \leq k$ and $k+2\leq j\leq d$.
Notice that any two paths, one in $\widehat{\mathcal{P}}_{(i,j)}$ and another in $\widehat{\mathcal{P}}_{(i',j')}$, have no common edges if and only if $i\neq i'$ and $j\neq j'$.
We only consider the case when $k\leq d-k-1$, since the other case (i.e., $k>d-k-1$) can be dealt with in the same way.
The classes of paths can be arranged in the following fashion.
\begin{center}
\begin{tabular}{c||ccccc}
$i$ & $S_i$ & & & \\ \hline
$1$ & $\widehat{\mathcal{P}}_{(1,k+2)}$, & $\widehat{\mathcal{P}}_{(2,k+3)}$, & $\widehat{\mathcal{P}}_{(3,k+4)}$, & $\ldots$ , & $\widehat{\mathcal{P}}_{(k,2k+1)}$. \\
$2$ & $\widehat{\mathcal{P}}_{(1,k+3)}$, & $\widehat{\mathcal{P}}_{(2,k+4)}$, & $\widehat{\mathcal{P}}_{(3,k+5)}$, & $\ldots$ , & $\widehat{\mathcal{P}}_{(k,2k+2)}$. \\
$\vdots$ & $\vdots$ & $\vdots$ & $\vdots$ & $\ddots$ & $\vdots$ \\
$d-k-1$ & $\widehat{\mathcal{P}}_{(1,d)}$\ \ , & $\widehat{\mathcal{P}}_{(2,k+2)}$, & $\widehat{\mathcal{P}}_{(3,k+3)}$, & $\ldots$ , & $\widehat{\mathcal{P}}_{(k,2k)}$.\ \ \ \ \\
\end{tabular}
\end{center}
In general, set $S_t$ collects the classes of paths $\widehat{\mathcal{P}}_{(1,t+k+1)},\widehat{\mathcal{P}}_{(2,t+k+2)},\widehat{\mathcal{P}}_{(3,t+k+3)},\ldots,\widehat{\mathcal{P}}_{(k,t+2k)}$, where the addition is taken modulo $d+1$ and plus $k+2$.
The chromatic number of the conflict graph induced by paths in $S_t$ is determined by the sizes of the classes $\widehat{\mathcal{P}}_{(i,j)}$ therein; more precisely, 
\begin{align*}
\chi(Q(S_t)) &\leq \max\left\{|\widehat{\mathcal{P}}_{(1,t+k+1)}|,|\widehat{\mathcal{P}}_{(2,t+k+2)}|,|\widehat{\mathcal{P}}_{(3,t+k+3)}|,\ldots,|\widehat{\mathcal{P}}_{(k,t+2k)}| \right\} \\
&= 
\begin{cases}
b_{k+2}, & \text{if } t=1 \text{ or } d-2k+1\leq t\leq d-k-1; \\
b_{t+k+1}, & \text{otherwise}.
\end{cases}
\end{align*}
Thus, we have
\begin{align}
\chi\left( Q\left( \biguplus_{\substack{1\leq i\leq k,\\ k+2\leq j\leq d}} \widehat{\mathcal{P}}_{(i,j)} \right) \right) &\leq \sum_{t=1}^{d-k-1} \chi\left( Q\left(S_t \right) \right) \notag \\
&= kb_{k+2}+b_{k+3}+b_{k+4}+\cdots+b_{d-k+1}. \label{eq:d>4-k=odd_3}
\end{align}

Combining \eqref{eq:d>4-k=odd_1}--\eqref{eq:d>4-k=odd_3}, we obtain
\begin{align*}
\chi(Q(R\setminus R')) \leq 2k(b_1-1) + k + kb_{k+2}+b_{k+3}+b_{k+4}+\cdots+b_{d-k+1}, 
\end{align*}
and further \eqref{eq:T'-upper} yields
\begin{align}
w(T) &\leq w(T') + \chi(Q(R\setminus R')) \notag \\
&< \frac{3}{2}\pi(T) - 3(k-1)b_1 - \frac{3}{2}\big(b_{k+1}+b_{k+2}+\cdots+b_d\big) + \frac{3}{2}(k-2) \notag \\
& \ \ \ \ \ + 2k(b_1-1) + k + kb_{k+2}+b_{k+3}+b_{k+4}+\cdots+b_{d-k+1} \notag \\
&= \frac{3}{2}\pi(T) -(k-3)b_1 - \frac{3}{2}\big(b_{k+1}\big) + \big(k-\frac{3}{2}\big)b_{k+2} + \frac{1}{2}(k-6) \notag \\
& \ \ \ \ \ - \frac{1}{2}\big(b_{k+3}+b_{k+4}+\cdots+b_{d-k+1}\big) - \frac{3}{2}\big( b_{d-k+2}+b_{d-k+3}+\cdots+b_d \big). \label{eq:d>4-k=odd_4}
\end{align}
Since $b_{k+2}\leq b_{k+1}\leq b_1-1$ and $k\geq 3$, one has 
\begin{equation}
-(k-3)b_1 - \frac{3}{2}\big(b_{k+1}\big) + \big(k-\frac{3}{2}\big)b_{k+2} + \frac{1}{2}(k-6)
\leq -(k-3) + \frac{1}{2}(k-6) < 0.
\label{eq:d>4-k=odd_5}
\end{equation}
Therefore, the result follows by plugging \eqref{eq:d>4-k=odd_5} into \eqref{eq:d>4-k=odd_4}.

\smallskip

When $k\geq 3$ and $k$ is even, the argument is similar to the odd case with a slight modification.
Let $T''$ be a tree obtained from $T'$ by removing an extra leaf $u_{k+1}$ from $B_{k+1}$, and let $B'_{k+1}=B_{k+1}-\{u_{k+1}\}$.
By the same argument in \eqref{eq:T'-upper1}--\eqref{eq:T'-upper}, we have
\begin{equation}
w(T'') < \frac{3}{2}\pi(T) - \frac{3}{2}(2k-1)b_1 - \frac{3}{2}\big( b_{k+1}+b_{k+2}+\cdots+b_d \big) + \frac{3}{2}(k-1).
\label{eq:T''-upper}
\end{equation}

Let $R''$ be the all-to-all routing of $T''$.
$R\setminus R''$ can be divided into the following classes.
\begin{itemize}
\item $\mathcal{P}_{(i,j)}:=\big\{P_{u_i,y}:\,y\in V(B'_j)\big\}$, for $1\leq i,j\leq k+1$.
\item $\mathcal{P}_0:=\big\{P_{u_i,r}:\,1\leq i \leq k+1\big\}$.
\item $\mathcal{P}_i:=\big\{P_{u_i,y}:\,y\in V(B'_{k+2})\big\}$, for $1\leq i\leq k+1$.
\item $\mathcal{P}_\infty:=\big\{P_{u_i,u_j}:\,1\leq i\neq j \leq k+1\big\}$.
\item $\widehat{\mathcal{P}}_{(i,j)}:=\big\{P_{u_i,y}:\,y\in V(B_j)\big\}$, for $1\leq i\leq k+1$ and $k+3\leq j\leq d$.
\end{itemize}
Observe that $k+1$ is even.
Again, by the same argument as proposed in \eqref{eq:d>4-k=odd_1}--\eqref{eq:d>4-k=odd_3}, we have
\begin{align}
\chi(Q(R\setminus R'')) \leq & 2(k+1)(b_1-1) + (k+1) + (k+1)b_{k+3} \notag \\ 
& +b_{k+4}+b_{k+5}+\cdots+b_{d-k}.
\label{eq:d>4-k=even_1}
\end{align}
Combining \eqref{eq:T''-upper} and \eqref{eq:d>4-k=even_1} yields
\begin{align}
w(T) &\leq w(T'') + \chi(Q(R\setminus R'')) \notag \\
&< \frac{3}{2}\pi(T) - \big(k-\frac{7}{2}\big)b_1 - \frac{3}{2}\big(b_{k+1}+b_{k+2}\big) + \big(k-\frac{1}{2}\big)b_{k+3} + \frac{1}{2}(k-5) \notag \\
& \ \ \ \ \ - \frac{1}{2}\big(b_{k+4}+b_{k+5}+\cdots+b_{d-k}\big) - \frac{3}{2}\big( b_{d-k+1}+b_{d-k+2}+\cdots+b_d \big). \label{eq:d>4-k=even_2}
\end{align}

It suffices to claim that 
\begin{equation}
- \big(k-\frac{7}{2}\big)b_1 - \frac{3}{2}\big(b_{k+1}+b_{k+2}\big) + \big(k-\frac{1}{2}\big)b_{k+3} + \frac{1}{2}(k-5) < 0.
\label{eq:d>4-k=even-claim}
\end{equation}
If $b_{k+3}=0$, the left-hand-side of \eqref{eq:d>4-k=even-claim} can be simplified as 
\begin{align*}
& -\big((k-4)+\frac{1}{2}\big)b_1 + \frac{1}{2}\big((k-4)-1\big) - \frac{3}{2}\big(b_{k+1}+b_{k+2}\big) \\
= & - \frac{1}{2}(k-4)(2b_1-1) - \frac{1}{2}(b_1+1) - \frac{3}{2}\big(b_{k+1}+b_{k+2}\big),
\end{align*}
which is less than $0$ due to $k\geq 4$.
If $b_{k+3}>0$, by plugging $b_{k+3}=b_1-\epsilon$, for some $\epsilon\geq 1$, into the left-hand-side of \eqref{eq:d>4-k=even-claim}, we derive from $b_{k+1}\geq b_{k+2}\geq b_{k+3}= b_1-\epsilon$ that
\begin{align*}
& - \big(k-\frac{7}{2}\big)b_1 - \frac{3}{2}\big(b_{k+1}+b_{k+2}\big) + \big(k-\frac{1}{2}\big)b_{k+3} + \frac{1}{2}(k-5) \\
\leq & -k\epsilon + \frac{1}{2}k + \frac{7}{2}\epsilon - \frac{5}{2} = -(\epsilon-1)\big(k-\frac{7}{2}\big) - \frac{1}{2}(k-2) \\
< & \ 0,
\end{align*}
as desired. 
\qed

\section{Concluding Remarks}\label{sec:conclusion}
In this paper, we prove that for any tree $T$, $w(T)<\frac{3}{2}\pi(T)$.
For any odd integer $k\geq 3$ and positive integer $t$, let $T_{k,t}$ be a rooted tree with $k$ branches, each of which has exactly $t$ vertices.
It has been showed in \cite{FLWZ17} that $w(T_{k,t})=kt^2$ and $\pi(T_{k,t})=(k-1)t^2+t$.
Therefore, $$\lim_{t\to\infty}\frac{w(T_{k,t})}{\pi(T_{k,t})}=\frac{k}{k-1},$$ which is upper-bounded by $\frac{3}{2}$ due to $k\geq 3$.
This indicates that the ratio $\frac{3}{2}$ is an asymptotically tight upper bound.
For a rational number $\delta$ between $1$ and $\frac{3}{2}$, it is called \emph{feasible} if there exists a tree $T$ such that $w(T)=\delta\pi(T)$.
It would be interesting to determine the spectrum of feasible rational numbers.
Some known feasible rational numbers can be found in~\cite{FLWZ17}.

\section{Declarations}\label{sec:declarations}
Data sharing not applicable to this article as no datasets were generated or analysed during the current study.

\section*{Acknowledgements}

The authors thank the referees for reading the manuscript carefully and providing helpful suggestions that improve the presentation of the paper.
This work was supported in part by the National Science and Technology Council, Taiwan, under grants 112-2115-M-153-004-MY2 and 104-2115-M-009-009, and the Fundamental Research Funds for the Central Universities of China under grant number 30920021127.

\rm
\bigskip

\end{document}